\newtheorem{thm}{Theorem}
\newtheorem{cor}[thm]{Corollary}
\numberwithin{equation}{section}
\numberwithin{thm}{section}
\newcommand{\set}[1]{\left\{{#1}\right\}}
\newcommand{\abs}[1]{\left\vert{#1}\right\vert}
\newcommand{\Z}{\mathbb{Z}}
\newcommand{\Q}{\mathbb{Q}}
\newcommand{\R}{\mathbb{R}}
\long\def\blankfootnotetext#1{\begingroup\def\thefootnote{\fnsymbol{footnote}}\footnotetext{#1}\endgroup}
\begin{document}
\title{A note on palindromic $\delta$-vectors for certain rational polytopes}
\author{Matthew H.\ J.\ Fiset and Alexander M.\ Kasprzyk}
\address{Department of Mathematics and Statistics\\University of New Brunswick\\Fredericton NB\\E3B 5A3\\Canada.}
\email{u0a35@unb.ca\textrm{,} kasprzyk@unb.ca}
\maketitle
\blankfootnotetext{The first author was funded by an NSERC USRA grant. The second author is funded by an ACEnet research fellowship.}
\blankfootnotetext{2000 \textit{Mathematics Subject Classification}. Primary 05A15; Secondary 11H06.}
\blankfootnotetext{\textit{Key words and phrases}. Rational polytope, Ehrhart $\delta$-vector, palindromic theorem.}
\begin{abstract}
Let $P$ be a convex polytope containing the origin, whose dual is a lattice polytope. Hibi's Palindromic Theorem tells us that if $P$ is also a lattice polytope then the Ehrhart $\delta$-vector of $P$ is palindromic. Perhaps less well-known is that a similar result holds when $P$ is rational. We present an elementary lattice-point proof of this fact.
\end{abstract}
\section{Introduction}
A \emph{rational polytope} $P\subset \R^n$ is the convex hull of finitely many points in $\Q^n$. We shall assume that $P$ is of maximum dimension, so that $\dim{P}=n$. Throughout let $k$ denote the smallest positive integer for which the dilation $kP$ of $P$ is a \emph{lattice polytope} (i.e.~the vertices of $kP$ lie in $\Z^n$).

A \emph{quasi-polynomial} is a function defined on $\Z$ of the form:
$$q(m)=c_n(m)m^n+c_{n-1}(m)m^{n-1}+\ldots+c_0(m),$$
where the $c_i$ are periodic coefficient functions in $m$. It is known~(\cite{Ehr62}) that for a rational polytope $P$, the number  of lattice points in $mP$, where $m\in\Z_{\geq 0}$, is given by a quasi-polynomial of degree $n=\dim{P}$ called the \emph{Ehrhart quasi-polynomial}; we denote this by $L_P(m):=\abs{mP\cap \Z^n}$. The minimum period common to the cyclic coefficients $c_i$ of $L_P$ divides $k$ (for further details see~\cite{BSW08}).

Stanley proved in~\cite{Stan80} that the generating function for $L_P$ can be written as a rational function:
$$\mathrm{Ehr}_P(t):=\sum_{m\geq 0}L_P(m)t^m=\frac{\delta_0+\delta_1t+\ldots+\delta_{k(n+1)-1}t^{k(n+1)-1}}{(1-t^k)^{n+1}},$$
whose coefficients $\delta_i$ are non-negative. For an elementary proof of this and other relevant results, see~\cite{BS07} and~\cite{BR}. We call $(\delta_0,\delta_1,\ldots,\delta_{k(n+1)-1})$ the \emph{(Ehrhart) $\delta$-vector} of $P$.

The \emph{dual polyhedron} of $P$ is given by $P^\vee:=\set{u\in\R^n\mid\left<u,v\right>\leq 1\text{ for all }v\in P}$. If the origin lies in the interior of $P$ then $P^\vee$ is a rational polytope containing the origin, and $P=(P^\vee)^\vee$. We restrict our attention to those $P$ containing the origin for which $P^\vee$ is a lattice polytope.

We give an elementary lattice-point proof that, with the above restriction, the $\delta$-vector is palindromic (i.e. $\delta_i=\delta_{k(n+1)-1-i}$). When $P$ is \emph{reflexive}, meaning that $P$ is also a lattice polytope (equivalently, $k=1$), this result is known as \emph{Hibi's Palindromic Theorem}~\cite{Hib91}. It can be regarded as a consequence of a theorem of Stanley's concerning the more general theory of Gorenstein rings; see~\cite{Sta78}.
\section{The main result}
Let $P$ be a rational polytope and consider the Ehrhart quasi-polynomial $L_P$. There exist $k$ polynomials $L_{P,r}$ of degree $n$ in $l$ such that when $m=lk+r$ (where $l,r\in\Z_{\geq 0}$ and $0\leq r<k$) we have that $L_P(m)=L_{P,r}(l)$. The generating function for each $L_{P,r}$ is given by:
\begin{equation}
\mathrm{Ehr}_{P,r}(t):=\sum_{l\geq 0}L_{P,r}(l)t^l=\frac{\delta_{0,r}+\delta_{1,r}t+\ldots+\delta_{n,r}t^n}{(1-t)^{n+1}},
\label{eq:partial_Ehr}
\end{equation}
for some $\delta_{i,r}\in\Z$.

\begin{thm}\label{thm:main_result}
Let $P$ be a rational $n$-tope containing the origin, whose dual $P^\vee$ is a lattice polytope. Let $k$ be the smallest positive integer such that $kP$ is a lattice polytope. Then:
$$\delta_{i,r}=\delta_{n-i,k-r-1}.$$
\end{thm}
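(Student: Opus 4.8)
The plan is to reduce the claimed symmetry of the partial $\delta$-vectors to a single functional equation for $L_P$, obtained by marrying Ehrhart--Macdonald reciprocity to a lattice-point observation that exploits the integrality of $P^\vee$.

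The geometric input is the identity
$$\mathrm{int}(mP)\cap\Z^n=(m-1)P\cap\Z^n\qquad\text{for every integer }m\ge1.$$
Since $P=(P^\vee)^\vee$ and $P^\vee$ is a lattice polytope, $P$ is the solution set of finitely many inequalities $\left<u,x\right>\le1$ with $u\in\Z^n$ (the vertices of $P^\vee$); hence $mP=\set{x\mid\left<u,x\right>\le m\text{ for all such }u}$ and, $P$ being full-dimensional with the origin in its interior, $\mathrm{int}(mP)$ is cut out by the corresponding strict inequalities $\left<u,x\right><m$. For $x\in\Z^n$ each value $\left<u,x\right>$ is an integer, so $\left<u,x\right><m$ is equivalent to $\left<u,x\right>\le m-1$, and the displayed identity follows. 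Writing $L_{P^\circ}(m):=\abs{\mathrm{int}(mP)\cap\Z^n}$, this says $L_{P^\circ}(m)=L_P(m-1)$ for every $m\ge1$.

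Next I would invoke Ehrhart--Macdonald reciprocity, $L_{P^\circ}(m)=(-1)^nL_P(-m)$, giving $L_P(m-1)=(-1)^nL_P(-m)$ for all $m\ge1$. Both sides are quasi-polynomials in $m$ with period dividing $k$ that agree for infinitely many values, so they agree on all of $\Z$; substituting $m\mapsto m+1$ yields
$$L_P(m)=(-1)^nL_P(-1-m)\qquad(m\in\Z).$$
Writing $m=lk+r$ with $0\le r<k$, the argument $-1-m$ has residue $k-1-r$ modulo $k$ and quotient $-(l+1)$, uniformly in $r$ (the cases $r=0$ and $r=k-1$ included), so the functional equation decomposes as
$$L_{P,r}(l)=(-1)^nL_{P,k-1-r}(-l-1)\qquad(0\le r<k,\ l\in\Z).$$

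It remains to translate this into the generating functions \eqref{eq:partial_Ehr}. For any polynomial $f$ of degree at most $n$, writing $\sum_{l\ge0}f(l)t^l=g(t)/(1-t)^{n+1}$ with $\deg g\le n$, the standard reciprocity for rational generating functions (see \cite{BR}) gives $\sum_{l\ge0}f(-1-l)t^l=(-1)^nt^ng(1/t)/(1-t)^{n+1}$. Applying this with $f=L_{P,k-1-r}$ and using the last identity, the two factors of $(-1)^n$ cancel and we obtain
$$\sum_{i=0}^n\delta_{i,r}\,t^i=t^n\sum_{i=0}^n\delta_{i,k-1-r}\,t^{-i}=\sum_{i=0}^n\delta_{i,k-1-r}\,t^{n-i};$$
comparing the coefficient of $t^i$ gives $\delta_{i,r}=\delta_{n-i,k-1-r}$, which is the assertion. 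I expect the only genuine difficulty to be bookkeeping: one must check that the geometric identity, valid a priori only for $m\ge1$, really promotes to an identity of quasi-polynomials (so that negative and shifted arguments are legitimate), and that the residue and quotient of $-1-m$ are tracked correctly so that the decomposition into the $L_{P,r}$ is uniform in $r$ --- which in particular makes the reflexive case $k=1$ fall out for free.
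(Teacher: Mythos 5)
Your proposal is correct and follows essentially the same route as the paper: the same key identity $(mP^\circ)\cap\Z^n=\left((m-1)P\right)\cap\Z^n$ derived from the integrality of the vertices of $P^\vee$, combined with Ehrhart--Macdonald reciprocity and a decomposition of the argument $-1-m$ into quotient $-(l+1)$ and residue $k-1-r$. The only (cosmetic) difference is the final translation step --- you invoke the standard $t\mapsto1/t$ reciprocity for rational generating functions where the paper expands both sides in the binomial basis ${l+n-i\choose n}$ and compares coefficients --- and your indexing, which keeps $k-1-r$ in the range $0\leq k-1-r<k$ throughout, is if anything slightly cleaner than the paper's use of $L_{P,k-r}$ and $L_{P,r-1}$.
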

\begin{proof}
By Ehrhart--Macdonald reciprocity (\cite{Ehr67,Mac71}) we have that:
$$L_P(-lk-r)=(-1)^nL_{P^\circ}(lk+r),$$
where $L_{P^\circ}$ enumerates lattice points in the strict interior of dilations of $P$. The left-hand side equals $L_P\left(-(l+1)k+(k-r)\right)=L_{P,k-r}\left(-(l+1)\right)$. We shall show that the right-hand side is equal to $(-1)^nL_P(lk+r-1)=(-1)^nL_{P,r-1}(l)$.

Let $H_u:=\set{v\in\R^n\mid\left<u,v\right>=1}$ be a bounding hyperplane of $P$, where $u\in\mathrm{vert}\,P^\vee$. By assumption, $u\in\Z^n$ and so the lattice points in $\Z^n$ lie at integer heights relative to $H_u$; i.e. given $u'\in\Z^n$ there exists some $c\in\Z$ such that $u'\in\set{v\in\R^n\mid\left<u,v\right>=c}$. In particular, there do not exist lattice points at non-integral heights. Since:
$$P=\bigcap_{u\in\mathrm{vert}\,P^\vee}H^-_u,$$
where $H^-_u$ is the half-space defined by $H_u$ and the origin, we see that $(mP^\circ)\cap\Z^n=\left((m-1)P\right)\cap\Z^n$. This gives us the desired equality.

We have that $L_{P,k-r}\left(-(l+1)\right)=(-1)^nL_{P,r-1}(l)$. By considering the expansion of~(\ref{eq:partial_Ehr}) we obtain:
\begin{align*}
\sum_{i=0}^n\delta_{i,k-r}{-(l+1)+n-i\choose{n}}&=L_{P,k-r}(-(l+1))\\
&=(-1)^nL_{P,r-1}(l)=(-1)^n\sum_{i=0}^n\delta_{i,r-1}{l+n-i\choose{n}}.
\end{align*}
But ${-(l+1)+n-i\choose{n}}=(-1)^n{l+n-i\choose{n}}$, and since ${l\choose{n}},{l+1\choose{n}},\ldots,{l+n\choose{n}}$ form a basis for the vector space of polynomials in $l$ of degree at most $n$, we have that $\delta_{i,k-r}=\delta_{n-i,r-1}$.
\end{proof}

\begin{cor}
The $\delta$-vector of $P$ is palindromic.
\end{cor}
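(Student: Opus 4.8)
The plan is to express the full Ehrhart $\delta$-vector of $P$ in terms of the partial invariants $\delta_{i,r}$ and then read off the palindromic symmetry directly from Theorem~\ref{thm:main_result}. First I would split the Ehrhart series according to residues modulo $k$: writing $m=lk+r$ with $0\leq r<k$ gives
$$\mathrm{Ehr}_P(t)=\sum_{r=0}^{k-1}t^r\sum_{l\geq 0}L_{P,r}(l)t^{lk}=\sum_{r=0}^{k-1}t^r\,\mathrm{Ehr}_{P,r}(t^k).$$
Substituting $t^k$ for $t$ in~(\ref{eq:partial_Ehr}) and clearing the common denominator $(1-t^k)^{n+1}$, this becomes
$$\mathrm{Ehr}_P(t)=\frac{\sum_{r=0}^{k-1}\sum_{i=0}^{n}\delta_{i,r}\,t^{ki+r}}{(1-t^k)^{n+1}}.$$
Comparing with Stanley's expression for $\mathrm{Ehr}_P$ recalled in the introduction, and observing that as $(i,r)$ ranges over $0\leq i\leq n$, $0\leq r<k$ the exponents $ki+r$ run over exactly $\set{0,1,\ldots,k(n+1)-1}$ without repetition, I can identify $\delta_{ki+r}=\delta_{i,r}$.

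It then remains to check that the involution $j\mapsto k(n+1)-1-j$ on exponents corresponds to the involution $(i,r)\mapsto(n-i,k-1-r)$ on index pairs. Indeed, if $j=ki+r$ then
$$k(n+1)-1-j=k(n-i)+(k-1-r),$$
and since $0\leq n-i\leq n$ and $0\leq k-1-r<k$ this is again a legitimate decomposition, so $\delta_{k(n+1)-1-j}=\delta_{n-i,\,k-1-r}$. By Theorem~\ref{thm:main_result} we have $\delta_{i,r}=\delta_{n-i,k-r-1}=\delta_{n-i,k-1-r}$, hence $\delta_j=\delta_{k(n+1)-1-j}$ for every $j$, which is precisely the asserted palindromicity.

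The argument is essentially bookkeeping, so I do not expect a genuine obstacle. The one place to be careful is verifying that $(i,r)\mapsto ki+r$ is a bijection onto $\set{0,\ldots,k(n+1)-1}$ — this is just uniqueness of division with remainder — and matching the index conventions in Theorem~\ref{thm:main_result} exactly, so that the reindexing $(i,r)\mapsto(n-i,k-1-r)$ is applied without an off-by-one slip.
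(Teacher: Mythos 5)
Your argument is correct and follows essentially the same route as the paper: the paper's entire proof is the observation that $\mathrm{Ehr}_P(t)=\sum_{r=0}^{k-1}t^r\,\mathrm{Ehr}_{P,r}(t^k)$, from which it declares the result immediate. You have simply carried out the bookkeeping the paper leaves implicit --- the identification $\delta_{ki+r}=\delta_{i,r}$ via uniqueness of division with remainder and the matching of the involutions --- and done so accurately.
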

\begin{proof}
This is immediate once we observe that:
$$\mathrm{Ehr}_P(t)=\mathrm{Ehr}_{P,0}(t^k)+t\mathrm{Ehr}_{P,1}(t^k)+\ldots+t^{k-1}\mathrm{Ehr}_{P,k-1}(t^k).$$
\end{proof}
\section{Concluding remarks}
The crucial observation in the proof of Theorem~\ref{thm:main_result} is that $(mP^\circ)\cap\Z^n=\left((m-1)P\right)\cap\Z^n$. In fact, a consequence of Ehrhart--Macdonald reciprocity and a result of Hibi~\cite{Hibi92} tells us that this property holds if and only if $P^\vee$ is a lattice polytope. Hence rational convex polytopes whose duals are lattice polytopes are characterised by having palindromic $\delta$-vectors. This can also be derived from Stanley's work~\cite{Sta78} on Gorenstein rings.
\bibliographystyle{amsalpha}
\providecommand{\bysame}{\leavevmode\hbox to3em{\hrulefill}\thinspace}
\providecommand{\MR}{\relax\ifhmode\unskip\space\fi MR }
\providecommand{\MRhref}[2]{%
  \href{http://www.ams.org/mathscinet-getitem?mr=#1}{#2}
}
\providecommand{\href}[2]{#2}

\end{document}